\bmdefine{\xxx}{x}
\bmdefine{\aaa}{a}
\bmdefine{\bbb}{b}
\bmdefine{\eee}{e}
\bmdefine{\vvv}{v}
\bmdefine{\www}{w}
\bmdefine{\mmm}{m}
\bmdefine{\zerovec}{0}
\DeclareMathAlphabet{\mathscr}{U}{rsfs}{m}{n}
\newcommand{\CCC}{\mathbb{C}}
\newcommand{\RRR}{\mathbb{R}}
\newcommand{\FFF}{\mathbb{F}}
\newcommand{\NNN}{\mathbb{N}}
\newcommand{\ZZZ}{\mathbb{Z}}
\newcommand{\RRRplus}{\RRR_{\geq0}}
\newcommand{\RRRplusb}{{\RRR_{\geq0}}}
\newcommand{\msOOO}{\mathscr{O}}
\newcommand{\define}{\mathrel{:=}}
\newcommand{\rank}{\mathrm{rank}}
\newcommand{\transpose}{^\top}
\newcommand{\image}{{\mathrm{Im}}}
\newcommand{\grank}{{\mathrm{grank}}}
\newcommand{\trank}{{\mathrm{trank}}}
\newcommand{\nnrank}{{\mathrm{rank}_{\RRRplus}}}
\newcommand{\ranks}{{\rank_S}}
\newcommand{\rankr}{{\rank_\RRR}}
\newcommand{\rankc}{{\rank_\CCC}}
\newtheorem{thm}{Theorem}[section]
\newtheorem{example}[thm]{Example}
\newtheorem{lemma}[thm]{Lemma}
\newtheorem{remark}[thm]{Remark}
\newcommand{\mysloppy}{\tolerance 9999 \hfuzz .5\p@ \vfuzz .5\p@}
\title{%
Maximal and typical nonnegative ranks of nonnegative tensors}
\author{%
Toshio SUMI\footnote{Faculty of Arts and Science, Kyushu University, Fukuoka, Japan},
Mitsuhiro MIYAZAKI%
\footnote{Department of  Mathematics, Kyoto University of Education, Kyoto, Japan}
and
Toshio SAKATA\footnote{Emeritus professor, Kyushu University, Fukuoka, Japan}}
\date{Version of \today}
\date{}
\begin{document}
\sloppy

\maketitle

\begin{abstract}
Let $N_1$, \ldots, $N_d$ be positive integers with 
$N_1\leq\cdots\leq N_d$.
Set $N=N_1\cdots N_{d-1}$.
We show in this paper that an integer $r$ is a typical nonnegative rank
of nonnegative tensors of format $N_1\times\cdots\times N_d$ if and only if
$r\leq N$ and $r$ is greater than or equals to the generic rank of
tensors over $\CCC$ of format $N_1\times\cdots\times N_d$.
We also show that the maximal nonnegative rank of nonnegative tensors
of format $N_1\times\cdots\times N_d$ is $N$.
\\
Keywords:
tensor rank, typical rank, nonnegative rank, nonnegative tensor
\\
MSC:15A69, 14P10
\end{abstract}


\section{Introduction}

Recently high-dimensional array data are intensively studied in connection with data analysis.
In these areas of research, high-dimensional data are called tensors.
More precisely, a $d$-dimensional datum aligned in an $N_1\times\cdots\times N_d$ 
high-dimensional box form is called an $N_1\times \cdots\times N_d$ tensor
or a $d$-tensor of format $N_1\times\cdots\times N_d$.

Let $S$ be a set.
The set of $N_1\times\cdots\times N_d$ tensors with entries in $S$ is denoted
$S^{N_1\times\cdots\times N_d}$.
An element $T\in S^{N_1\times\cdots\times N_d}$ is denoted by
$T=(t_{i_1\cdots i_d})_{1\leq j\leq N_j}$ or simply
$T=(t_{i_1\cdots i_d})$.
The case where $d=2$, $S^{N_1\times N_2}$ is nothing but the set of $N_1\times N_2$
matrices with entries in $S$.
If $S$ is a field, the rank of an element $M\in S^{N_1\times N_2}$ is defined in linear algebra.
The rank of $M$ is identical with the minimum number $r$ such that $M$ can be expressed
as a sum of $r$ rank 1 matrices, where we set empty sum to be zero.
Further, a nonzero matrix is rank 1 if and only if it can be expressed as a product
of a nonzero column vector and a nonzero row vector.
We generalize the notion of rank to the general case by using this 
characterization of ranks of matrices.

Suppose $S$ is a subset of a commutative ring with identity which is closed 
under addition and multiplication and contains 0 and 1.
A nonzero element $T=(t_{i_1\cdots i_d})\in S^{N_1\times\cdots\times N_d}$ is
defined to be $S$-rank 1 if there are 
$\vvv_j=(v_{j1}, \ldots, v_{jN_j})\transpose\in S^{N_j}$ for $1\leq j\leq d$
such that 
$
t_{i_1\cdots i_d}=\prod_{j=1}^d v_{ji_j}
$
for any $(i_1,\ldots, i_d)$.
For a general element $T\in S^{N_1\times\cdots\times N_d}$,
we define the $S$-rank of $T$ as the minimum number $r$ such that $T$ can be
expressed as a sum of $r$ $S$-rank 1 tensors.
The $S$-rank of $T$ is denoted $\ranks T$.
We attach the prefix ``$S$-'' since the rank depends on $S$.
In fact, there is an element of $\RRR^{2\times 2\times 2}$ such that
$\rankr T=3$ and $\rankc T=2$.

In this paper, we consider the case where $S=\RRRplus\define\{x\in\RRR\mid x\geq0\}$.
Since it is important to work in the set of nonnegative real numbers in the
statistical point of view, $\RRRplus$-rank is studied by several researchers
\cite{all etal,qcl}.
They called an element $\RRRplus^{N_1\times\cdots\times N_d}$ a nonnegative tensor
and called the $\RRRplus$-rank nonnegative rank.
Allman et al. \cite{all etal} showed that under mild condition, nonnegative rank 
of a nonnegative tensor is identical
with its $\RRR$-rank if its $\RRR$-rank is less than or equals to 2.

We consider in this paper typical nonnegative ranks of $\RRRplus^{N_1\times \cdots\times N_d}$
(see section \ref{sec:pre} for definition).
Suppose that $N_1\leq \cdots\leq N_d$ and set $N=N_1\cdots N_{d-1}$.
We show 
that $r$ is a typical nonnegative rank of nonnegative 
$N_1\times\cdots\times N_d$
tensors if and only if $r\leq N$ and $r$ is greater than or equals to
the minimal typical $\RRR$-rank.
In the way of proving this fact, we also show that $N$ is the maximal nonnegative
rank of $N_1\times\cdots\times N_d$ nonnegative tensors.


\section{Preliminaries}
\label{sec:pre}

In this section, we fix notation, state definitions and recall some basic facts
in order to prepare for the main argument.
Let $\FFF$ be the real number field $\RRR$ or the complex number field $\CCC$
and let $n$ be a positive integer.
We denote by $\FFF^n$ the vector space of $n$-dimensional column vectors with entries in $\FFF$
and by $\eee_i$ the element of $\FFF^n$ whose $i$-th entry is 1 and other entries are 0.
By abuse of notation, we use the same symbol $\eee_i$ to express an element of
$\FFF^n$ and $\FFF^m$ even if  $n\neq m$.
The meaning will be clear from the context.

Let $N_1, \ldots, N_d$ be positive integers.
Since an element $\FFF^{N_1}\otimes_\FFF\cdots\otimes_\FFF\FFF^{N_d}$ is expressed 
uniquely as
$$
\sum_{i_1=1}^{N_1}\cdots
\sum_{i_d=1}^{N_d}
t_{i_1\cdots i_d}\eee_{i_1}\otimes\cdots\otimes\eee_{i_d},
$$
we identify $\FFF^{N_1\times\cdots\times N_d}$ and 
$\FFF^{N_1}\otimes_\FFF\cdots\otimes_\FFF\FFF^{N_d}$
by the correspondence
$$
(t_{i_1\cdots i_d})\leftrightarrow
\sum_{i_1=1}^{N_1}\cdots
\sum_{i_d=1}^{N_d}
t_{i_1\cdots i_d}\eee_{i_1}\otimes\cdots\otimes\eee_{i_d}.
$$
We introduce the Euclidean topology to $\FFF^{N_1\times\cdots\times N_d}$
by identifying 
$\FFF^{N_1\times\cdots\times N_d}$
with
$\FFF^{N_1\cdots N_d}$.

Let $S$ be $\RRRplus\define\{x\in\RRR\mid x>0\}$, $\RRR$ or $\CCC$.
We interpret $\FFF=\RRR$ if 
$S=\RRRplus$ or $S=\RRR$ 
and $\FFF=\CCC$ if $S=\CCC$
in the rest of this section.
We define
$$
\Phi_1^S\colon S^{N_1}\times\cdots\times S^{N_d}\to S^{N_1\times\cdots\times N_d}
$$
by
$\Phi_1^S((v_{11},\ldots,v_{1N_1})\transpose,\ldots,
(v_{d1},\ldots,v_{dN_d})\transpose)=(v_{1i_1}\cdots v_{di_d})_{1\leq i_j\leq N_j}$.
Further, we define for $r\geq 2$, 
$\Phi_r^S\colon (S^{N_1}\times\cdots\times S^{N_d})^r\to S^{N_1\times\cdots\times N_d}$
by
$\Phi_r^S(a_1,\ldots, a_r)=\Phi_1^S(a_1)+\cdots+\Phi_1^S(a_r)$.
We define the $S$-rank of a nonzero element $T$ of $S^{N_1\times\cdots\times N_d}$
is $r$ if $T\in\image \Phi_r^S\setminus\image\Phi_{r-1}^S$,
where we interpret $\image\Phi_0^S$ to be the set consists of zero element only,
and we define the $S$-rank of the zero element of $S^{N_1\times\cdots\times N_d}$ to be 0.
If the set of elements $S^{N_1\times\cdots\times N_d}$ whose $S$-rank is $r$ contains
a nonempty Euclidean open subset of $\FFF^{N_1\times\cdots\times N_d}$, we say that 
$r$ is a typical $S$-rank of $N_1\times\cdots\times N_d$ tensors.

It is known that there exists a unique typical $\CCC$-rank of $N_1\times\cdots\times N_d$
tensors over $\CCC$ which is called the generic rank of $N_1\times\cdots\times N_d$
tensors over $\CCC$ and is 
denoted $\grank_\CCC(N_1,\ldots,N_d)$.
Further, it is known that the generic rank of $N_1\times\cdots\times N_d$ tensors over
$\CCC$ is identical with the minimal typical $\RRR$-rank of $N_1\times\cdots\times N_d$
tensors.
See \cite[Theorem 2]{bt} or \cite[Chapter 6]{book}.

In this paper, we consider the case where $S=\RRRplus$.
In this case an element of $S^{N_1\times\cdots\times N_d}$ is called a
nonnegative tensor of format $N_1\times\cdots\times N_d$
or an $N_1\times\cdots\times N_d$ nonnegative tensor, an $S$-rank
is called a nonnegative rank and a typical $S$-rank is called a typical
nonnegative rank.
In the case where $d=2$, i.e. the matrix case is studied extensively as a
nonnegative matrix factorization.

\begin{example}\rm
The matrix
$
\begin{pmatrix}1&0&1&0\\
0&1&0&1\\
1&0&0&1\\
0&1&1&0
\end{pmatrix}
$
has $\RRR$-rank 3 and nonnegative rank 4.
In fact, the column vectors of this matrix span the convex polyhedral
cone
$\{(x_1,x_2,x_3,x_4)\transpose\in\RRRplus\mid x_1+x_2-x_3-x_4=0\}$.
See \cite[Example 3.6]{sta}.
\end{example}


\section{Main result}

In this section, we study maximal and typical nonnegative ranks of $N_1\times\cdots\times N_d$
nonnegative tensors with $N_1\leq\cdots\leq N_d$.
Set $N=N_1\cdots N_{d-1}$.
First we state the following upper bound of nonnegative ranks of nonnegative tensors
of format $N_1\times\cdots\times N_d$.

\begin{lemma}
\label{lem:u bound}
Let $T$ be a nonnegative tensor of format $N_1\times\cdots\times N_d$.
Then $\nnrank T\leq N$.
\end{lemma}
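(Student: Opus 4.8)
The plan is to flatten $T$ along its last mode and expand it in the standard tensor-product basis of the first $d-1$ modes. For each multi-index $(i_1,\ldots,i_{d-1})$ with $1\le i_j\le N_j$, put $\vvv_{i_1\cdots i_{d-1}}\define(t_{i_1\cdots i_{d-1}1},\ldots,t_{i_1\cdots i_{d-1}N_d})\transpose$, a column vector in $\RRRplus^{N_d}$. Under the identification of $\RRR^{N_1\times\cdots\times N_d}$ with $\RRR^{N_1}\otimes\cdots\otimes\RRR^{N_d}$ recalled above, grouping the defining expansion $\sum t_{i_1\cdots i_d}\eee_{i_1}\otimes\cdots\otimes\eee_{i_d}$ according to the first $d-1$ indices gives
$$
T=\sum_{i_1=1}^{N_1}\cdots\sum_{i_{d-1}=1}^{N_{d-1}}\eee_{i_1}\otimes\cdots\otimes\eee_{i_{d-1}}\otimes\vvv_{i_1\cdots i_{d-1}}.
$$

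Next I would observe that every summand is either $\zerovec$ or a nonnegative rank $1$ tensor: each $\eee_{i_j}$ lies in $\RRRplus^{N_j}$ and each $\vvv_{i_1\cdots i_{d-1}}$ lies in $\RRRplus^{N_d}$, so whenever the summand is nonzero it equals $\Phi_1^{\RRRplus}(\eee_{i_1},\ldots,\eee_{i_{d-1}},\vvv_{i_1\cdots i_{d-1}})$. The sum has exactly $N_1\cdots N_{d-1}=N$ terms; discarding the vanishing ones exhibits $T$ as a sum of at most $N$ nonnegative rank $1$ tensors, whence $\nnrank T\le N$ by the definition of nonnegative rank (the case $T=\zerovec$ is immediate, since then $\nnrank T=0$).

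There is no real obstacle here; the only points needing a word of care are the bookkeeping of the zero summands, handled above, and the purely notational role of the hypothesis $N_1\le\cdots\le N_d$, which merely ensures that $N=N_1\cdots N_{d-1}$ is the product of the $d-1$ smallest formats and hence that this is the sharpest bound a single-mode flattening can yield (for $d=2$ this is just the familiar fact that a nonnegative matrix has nonnegative rank at most its smaller dimension). The substantive work of the paper — that this bound is actually attained, and the exact description of which $r\le N$ are typical nonnegative ranks — lies well beyond this lemma.
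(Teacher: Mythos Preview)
Your argument is correct and is exactly the paper's: you expand $T$ along the last mode as $\sum_{i_1,\ldots,i_{d-1}}\eee_{i_1}\otimes\cdots\otimes\eee_{i_{d-1}}\otimes(t_{i_1\cdots i_{d-1}1},\ldots,t_{i_1\cdots i_{d-1}N_d})\transpose$, a sum of $N$ nonnegative rank~$\leq 1$ terms. The paper's version omits your remarks on zero summands and on the ordering hypothesis, but the proof is the same.
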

\begin{proof}
Set $T=(t_{i_1\cdots i_d})$.
Then
$$
T=\sum_{i_1=1}^{N_1}\cdots\sum_{i_{d-1}=1}^{N_{d-1}}\eee_{i_1}\otimes\cdots\otimes\eee_{i_{d-1}}
\otimes(t_{i_1\cdots i_{d-1}1}, \ldots, t_{i_1\cdots i_{d-1} N_d})\transpose.
$$
Thus, $\nnrank T\leq N$.
\end{proof}

Next we show the following.

\begin{lemma}
\label{lem:max trank}
$N$ is a typical nonnegative rank of nonnegative tensors of format
$N_1\times\cdots\times N_d$.
\end{lemma}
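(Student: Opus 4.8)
The plan is to exhibit an explicit Euclidean-open set of nonnegative tensors, all having nonnegative rank exactly $N$. By Lemma~\ref{lem:u bound} the rank is always at most $N$, so it suffices to produce a nonempty open subset $U\subseteq\RRRplus^{N_1\times\cdots\times N_d}$ on which $\nnrank T\geq N$; any $T\in U$ then witnesses that $N$ is a typical nonnegative rank.

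**First I would** flatten the tensor in the last coordinate: group the first $d-1$ indices into a single multi-index ranging over a set of size $N=N_1\cdots N_{d-1}$, so that a tensor $T$ is viewed as an $N\times N_d$ matrix $M(T)$ whose rows are indexed by $(i_1,\dots,i_{d-1})$ and whose columns are indexed by $i_d$. The key structural observation is that if $T=\sum_{k=1}^r \vvv_1^{(k)}\otimes\cdots\otimes\vvv_d^{(k)}$ is a nonnegative rank-$r$ decomposition, then $M(T)=\sum_{k=1}^r \www_k\, (\vvv_d^{(k)})\transpose$ where $\www_k\in\RRRplus^{N}$ is the flattening of $\vvv_1^{(k)}\otimes\cdots\otimes\vvv_{d-1}^{(k)}$, and each $\www_k$ is itself a nonnegative vector. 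In particular $\nnrank T$ is at least the nonnegative rank of the matrix $M(T)$, which in turn is at least the ordinary $\RRR$-rank of $M(T)$. So it is enough to find an open set of nonnegative tensors whose flattening $M(T)$ has $\RRR$-rank $N$ — i.e. full row rank.

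**The next step** is to check that this condition is genuinely open and nonempty. Since $N_1\leq\cdots\leq N_d$, we have $N_d\geq N_{d-1}$; I claim more strongly that $N_d\geq N=N_1\cdots N_{d-1}$ fails in general, so one cannot simply demand that $M(T)$ be an invertible square matrix. Instead, full row rank $N$ requires $N_d\geq N$, which need not hold. The correct finer argument: nonnegative rank of the flattening is not merely bounded below by $\RRR$-rank; for a generic nonnegative matrix of size $N\times N_d$ the nonnegative rank equals $\min(N,N_d)$, but we need the nonnegative rank of the whole tensor, which can exceed that of any single flattening. So I would instead build $T$ so that \emph{every} slice perpendicular to the last axis, together with a positivity/interior condition, forces $N$ summands: choose the $N$ vectors $\eee_{i_1}\otimes\cdots\otimes\eee_{i_{d-1}}\otimes \bbb_{(i_1\cdots i_{d-1})}$ from the proof of Lemma~\ref{lem:u bound} with the $\bbb$'s being linearly independent positive vectors in $\RRRplus^{N_d}$ (possible once $N_d\geq N$; when $N_d<N$ one argues on a maximal flattening of size $N_1\cdots N_{j}$ versus $N_{j+1}\cdots N_d$ and uses that the product of the two sides is balanced at the generic-rank value), and perturb. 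The openness comes from: the set of $T$ whose $(d-1)$-flattening $M(T)$ has a strictly positive entry in every position and whose rows are linearly independent is Euclidean-open, and I would show that on a possibly smaller open neighborhood the nonnegative rank cannot drop below $N$ by a semicontinuity / dimension-count obstruction — if $\nnrank T=r<N$ on a full-dimensional set, then $\image\Phi_r^{\RRRplus}$ would contain an open set, but $\dim\image\Phi_r^{\RRRplus}\le r(N_1+\cdots+N_d-d+1)$, and for the relevant formats this is $<\dim\RRRplus^{N_1\times\cdots\times N_d}=N_1\cdots N_d$ once $r<N$; I would verify this inequality holds in the regime $r=N-1$.

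**The main obstacle** I anticipate is precisely this last dimension count: showing $\dim\image\Phi_{N-1}^{\RRRplus}<N_1\cdots N_d$, equivalently $(N-1)(N_1+\cdots+N_d-d+1)<N_1\cdots N_d$, is false for small formats (e.g.\ $2\times2\times2$ has $N=2$, and $1\cdot 5=5<8$ is fine, but thin cases need care) — so the argument cannot be purely by parameter-counting and must instead be the honest linear-algebra argument that a nonnegative decomposition of a full-row-rank flattening with all-positive entries needs at least $N$ terms because the $\www_k$ must span $\RRR^N$ while lying in the nonnegative orthant, combined with a genericity condition ensuring no nonnegative dependency among fewer than $N$ of them can reproduce $M(T)$. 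Concretely, I would fix $T_0=\Phi_N^{\RRRplus}$ of the flattening-decomposition vectors chosen generically, note $\nnrank T_0=N$ by the rank-of-flattening bound plus Lemma~\ref{lem:u bound}, and then invoke that the set $\{T:\nnrank T\le N-1\}=\image\Phi_{N-1}^{\RRRplus}$ is closed in the subspace topology of a suitable constructible set — or, more cleanly, cite that the $(d-1)$-flattening map is continuous and $\{M:\rankr M=N\}$ is open, giving an open set of $T$ with $\nnrank T\ge\rankr M(T)=N$ directly once $N_d\ge N$, and handle $N_d<N$ by the same argument applied to the balanced flattening that realizes the generic rank. I expect the write-up to reduce to: (i) the flattening inequality $\nnrank T\ge\rankr(\text{flattening})$, which is immediate; (ii) openness of full-rank flattenings; (iii) nonemptiness, which needs the right choice of flattening index.
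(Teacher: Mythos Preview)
Your flattening approach has a genuine gap that you yourself identified but did not close: it only works when $N_d\geq N$. In that regime your argument is fine --- the $(d-1)$-flattening $M(T)$ is $N\times N_d$, full row rank is an open nonempty condition, and $\nnrank T\geq\rankr M(T)=N$ together with Lemma~\ref{lem:u bound} finishes. But when $N_d<N$ (e.g.\ already for $2\times2\times2$, where $N=4$ and $N_d=2$), no flattening can certify rank $N$: any flattening of $T$ is an $A\times B$ matrix with $AB=N_1\cdots N_d=N\cdot N_d$, so $\min(A,B)\leq\sqrt{N\cdot N_d}<N$. Hence $\rankr(\text{any flattening})<N$, and the inequality $\nnrank T\geq\rankr(\text{flattening})$ is too weak. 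Your ``balanced flattening'' suggestion therefore cannot work, and you already noted that the parameter count $(N-1)(N_1+\cdots+N_d-d+1)<N_1\cdots N_d$ fails in general. The remaining sentence about ``no nonnegative dependency among fewer than $N$ of them'' is exactly the hard part and is left as a wish rather than an argument.

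The paper's proof avoids flattenings entirely and gives a direct combinatorial obstruction. It picks $T_0=\sum_{(i_1,\ldots,i_d)\in I}\eee_{i_1}\otimes\cdots\otimes\eee_{i_d}$ where $I$ is the set of index tuples with $i_1+\cdots+i_d\equiv 0\pmod{N_d}$; crucially $\#I=N$ and changing any single coordinate of a tuple in $I$ lands outside $I$. For $T$ in a small ball around $T_0$ and any nonnegative decomposition $T=\sum_{k=1}^r\aaa_{1k}\otimes\cdots\otimes\aaa_{dk}$, one assigns to each $(i_1,\ldots,i_d)\in I$ the set $K_{(i_1,\ldots,i_d)}$ of summands $k$ for which $i_j$ is the strict argmax of $\aaa_{jk}$ in every mode $j$. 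These sets are pairwise disjoint by definition, and a short positivity estimate (using that off-$I$ entries of $T$ are tiny while $I$-entries are close to $1$) forces each $K_{(i_1,\ldots,i_d)}$ to be nonempty, whence $r\geq\#I=N$. This ``peak--location'' argument is the missing idea your proposal lacks for the case $N_d<N$.
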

\begin{proof}
Set $I=\{(i_1,\ldots, i_d)\mid 1\leq i_j\leq N_j$ for $1\leq j\leq d$
and $i_1+\cdots+i_d\equiv 0\pmod {N_d}\}$.
Then $\#I=N$.
Set also
$
T_0=\sum_{(i_1,\ldots,i_d)\in I}\eee_{i_1}\otimes\cdots\otimes \eee_{i_d}
$
and
$\epsilon=\frac{1}{3N}$.
We show that if $T\in\RRRplus^{N_1\times\cdots\times N_d}$ and
$||T-T_0||<\epsilon$, then $\nnrank T=N$,
where $||T-T_0||$ denotes the Euclidean norm.

Assume the contrary and set $r=\nnrank T$.
Then by Lemma \ref{lem:u bound}, we see that $r<N$.
Further set 
$
T=\sum_{k=1}^r \aaa_{1k}\otimes\cdots\otimes\aaa_{dk}
$,
where $\aaa_{jk}\in\RRRplus^{N_j}$ for any $1\leq j\leq d$ and $1\leq k\leq r$.
Set
$\aaa_{jk}=(a_{jk1}, \ldots, a_{jkN_j})\transpose$ and for
$(i_1,\ldots, i_d)\in I$, we set 
$$
K_{(i_1,\ldots, i_d)}\define\{k\mid 1\leq k\leq r,\ a_{jkl}<a_{jk i_j}
\mbox{ for any $1\leq j\leq d$ and $l\neq i_j$.}\}
$$
It is clear that if
$(i_1,\ldots, i_d)\neq(\imath'_1,\ldots,\imath'_d)$, then 
$K_{(i_1,\ldots, i_d)}\cap K_{(\imath'_1,\ldots,\imath'_d)}=\emptyset$.

Suppose that $(i_1,\ldots, i_d)\in I$ and $k_0\not\in K_{(i_1,\ldots,i_d)}$.
Then there are $j$ and $l$ with $i_j\neq l$ and $a_{jk_0 l}\geq a_{jk_0i_j}$.
Since $(i_1,\ldots, i_{j-1},l,i_{j+1},\ldots,i_d)\not\in I$ 
by the definition of $I$, we see that
$(i_1,\ldots,i_{j-1}, l,i_{j+1},\ldots i_d)$ entry of $T_0$ is 0.
Thus, $(i_1,\ldots,i_{j-1}, l,i_{j+1},\ldots i_d)$ entry of $T$ is 
less than $\epsilon$.
Therefore,
$$
a_{1k_0i_1}\cdots a_{jk_0i_j}\cdots a_{dk_0i_d}\leq
a_{1k_0i_1}\cdots a_{jk_0 l}\cdots a_{dk_0i_d}<\epsilon,
$$
since $a_{jki}\geq 0$ for any $j$, $k$ and $i$.

Set
$T'=\sum_{k\not\in K_{(i_1,\ldots, i_d)}}\aaa_{1k}\otimes\cdots\otimes \aaa_{dk}$.
Then, by the above argument, we see that the $(i_1,\ldots, i_d)$ entry of $T'$ is less than
$\epsilon(r-\#K_{(i_1,\ldots, i_d)})<\epsilon N$.
On the other hand, since $(i_1, \ldots, i_d)$ entry of $T_0$ is 1,
and $||T-T_0||<\epsilon$, we see that $(i_1, \ldots, i_d)$ entry of $T$
is greater than $1-\epsilon$.
Since $\epsilon=\frac{1}{3N}$, we see that $1-\epsilon>\epsilon N$ 
and therefore, $T\neq T'$, i.e.
$K_{(i_1,\ldots, i_d)}\neq\emptyset$.
Thus, we see that
$N=\#I\leq r$
since 
$K_{(i_1,\ldots, i_d)}\cap K_{(\imath'_1,\ldots,\imath'_d)}=\emptyset$
if $(i_1,\ldots, i_d)\neq(\imath'_1,\ldots,\imath'_d)$. 
This contradicts to the assumption that $r<N$.
\end{proof}
By Lemmas \ref{lem:u bound} and \ref{lem:max trank}, we see the
following fact.

\begin{thm}
\label{thm:max nn rank}
The maximal nonnegative rank of nonnegative tensors of format
$N_1\times\cdots\times N_d$ is $N$.
\end{thm}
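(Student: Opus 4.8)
The plan is to derive Theorem~\ref{thm:max nn rank} as an immediate combination of the two preceding lemmas, so essentially no new work is required beyond assembling the pieces correctly. First I would recall that the \emph{maximal} nonnegative rank of tensors of a given format means the largest value $r$ such that some nonnegative tensor of format $N_1\times\cdots\times N_d$ has nonnegative rank exactly $r$; equivalently, it is $\max\{\nnrank T\mid T\in\RRRplus^{N_1\times\cdots\times N_d}\}$. So the statement splits into an upper bound (no nonnegative tensor has nonnegative rank exceeding $N$) and the attainment (some nonnegative tensor has nonnegative rank exactly $N$).

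The upper bound is exactly Lemma~\ref{lem:u bound}: for every nonnegative tensor $T$ of format $N_1\times\cdots\times N_d$ we have $\nnrank T\leq N$, via the slicewise decomposition along the last factor. Hence $N$ is an upper bound for the set of nonnegative ranks that occur. For attainment, Lemma~\ref{lem:max trank} shows that $N$ is a typical nonnegative rank of such tensors; in particular the set of nonnegative tensors of nonnegative rank exactly $N$ contains a nonempty open set, so it is certainly nonempty, and therefore the value $N$ is actually realized. (Even more concretely, the tensor $T_0$ constructed in the proof of Lemma~\ref{lem:max trank}, or any $T$ within $\epsilon$ of it, has nonnegative rank exactly $N$.) Combining, the maximum is simultaneously bounded above by $N$ and attained at $N$, so it equals $N$.

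There is essentially no obstacle here: the only thing to be careful about is the logical direction, namely that Lemma~\ref{lem:u bound} gives ``$\le N$ always'' while Lemma~\ref{lem:max trank} gives ``$=N$ somewhere,'' and that a typical rank in particular occurs (so the word ``typical'' is not needed here, merely ``occurs''). I would phrase the proof in two short sentences: by Lemma~\ref{lem:u bound} the nonnegative rank of any nonnegative tensor of format $N_1\times\cdots\times N_d$ is at most $N$, and by Lemma~\ref{lem:max trank} there is a nonnegative tensor of this format whose nonnegative rank is $N$; therefore the maximal nonnegative rank is $N$. That is all the argument needs.
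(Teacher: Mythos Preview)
Your proposal is correct and matches the paper's own argument exactly: the paper simply states that the theorem follows from Lemmas~\ref{lem:u bound} and~\ref{lem:max trank}, which is precisely the combination of the universal upper bound and the attainment (via typicality) that you describe.
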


Next we consider the set of typical nonnegative ranks of nonnegative
tensors of format $N_1\times\cdots\times N_d$.
First we recall the following fact.

\begin{lemma}[{see e.g. \cite[Chapter 6]{book}}]
\label{lem:g than grank}
An integer $r$ is greater than or equals to 
$\grank_\CCC(N_1,\ldots,N_d)$ if and only if  the Jacobian
of $\Phi_r^\CCC$ is row full rank for some 
$(a_1,\ldots,a_r)\in(\CCC^{N_1}\times\cdots\times\CCC^{N_d})^r$.
In particular, if $r\geq\grank_\CCC(N_1,\ldots,N_d)$,
then there is a nonzero maximal minor of Jacobian of $\Phi_r^\CCC$
as a polynomial.
\end{lemma}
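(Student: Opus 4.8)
The plan is to recast the statement in terms of the $r$-th secant variety of the Segre variety and reduce it to two standard facts over $\CCC$: the dimension of the Zariski closure of the image of a polynomial map equals the generic rank of its Jacobian, and a constructible subset of an irreducible variety contains a nonempty Euclidean open set exactly when it is Zariski dense. First I would identify $\image\Phi_r^\CCC$ with the set $\{T\in\CCC^{N_1\times\cdots\times N_d}\mid\rankc T\leq r\}$, which is immediate from the definition of $\CCC$-rank; by Chevalley's theorem it is constructible, and being the image of the irreducible affine variety $(\CCC^{N_1}\times\cdots\times\CCC^{N_d})^r$ under a morphism, its Zariski closure $X_r$ (the affine cone over the $r$-th secant variety of the Segre variety) is irreducible. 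Thus tensors of $\CCC$-rank $\leq r$ fill a nonempty Euclidean open subset of $\CCC^{N_1\times\cdots\times N_d}$ if and only if $X_r=\CCC^{N_1\times\cdots\times N_d}$, i.e.\ $\dim X_r=N_1\cdots N_d$; and since the generic rank is, by the uniqueness recalled above, the smallest $r$ for which tensors of rank $\leq r$ are Zariski dense, this is in turn equivalent to $r\geq\grank_\CCC(N_1,\ldots,N_d)$.

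It remains to link $\dim X_r$ to the Jacobian. Because we are in characteristic $0$, generic smoothness applied to $\Phi_r^\CCC$, together with lower semicontinuity of the rank of the Jacobian, yields $\max_p\rank(d\Phi_r^\CCC)_p=\dim X_r$, the maximum being attained on a nonempty Zariski open set of points $p$. Now the Jacobian of $\Phi_r^\CCC$ is an $(N_1\cdots N_d)\times\bigl(r(N_1+\cdots+N_d)\bigr)$ matrix, so it attains full row rank at some $p$ precisely when $\max_p\rank(d\Phi_r^\CCC)_p=N_1\cdots N_d$, i.e.\ precisely when $r\geq\grank_\CCC(N_1,\ldots,N_d)$. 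This is the asserted equivalence.

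For the ``in particular'' clause: each coordinate of $\Phi_r^\CCC$ is a polynomial in the coordinates of $(a_1,\ldots,a_r)$ --- a sum of $r$ multilinear monomials of the shape $v_{1i_1}\cdots v_{di_d}$ --- so every entry of its Jacobian is a polynomial as well; and when $r\geq\grank_\CCC$ the map is dominant, which forces $r(N_1+\cdots+N_d)\geq N_1\cdots N_d$, so the maximal minors of the Jacobian are genuine $(N_1\cdots N_d)\times(N_1\cdots N_d)$ subdeterminants, each a polynomial in $(a_1,\ldots,a_r)$. Full row rank at some $p$ means one of these minors is nonzero at $p$, hence is a nonzero polynomial.

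The one step carrying real content is the identity $\max_p\rank(d\Phi_r^\CCC)_p=\dim X_r$ together with the passage between ``contains a nonempty Euclidean open set'' and Zariski density; both rest on the fiber-dimension theorem, generic smoothness in characteristic $0$, and Chevalley's theorem, and neither is special to nonnegative tensors. Identifying the image, and checking that the Jacobian and its maximal minors are polynomials, is routine.
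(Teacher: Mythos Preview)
The paper does not give its own proof of this lemma: it is stated with a citation (``see e.g.\ \cite[Chapter 6]{book}'') and no proof environment follows. Your write-up supplies precisely the standard algebro-geometric argument that such a reference would contain---identify $\overline{\image\Phi_r^\CCC}$ with the affine cone over the $r$-th secant of the Segre, use that in characteristic~$0$ the dimension of the image closure equals the generic Jacobian rank, and translate ``$r\geq\grank_\CCC$'' into ``$\dim X_r=N_1\cdots N_d$'' via the definition of generic rank. This is correct and is the expected proof; there is nothing to compare it against in the paper itself.
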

Note that every Jacobian of $\Phi_r^\CCC$ as a polynomial is 
a polynomial with coefficients in $\ZZZ$.
We also note the following basic fact.

\begin{lemma}
\label{lem:inf domain}
Let $R$ be an integral domain, $X_1$, \ldots, $X_n$ indeterminates and
$A_1$, \ldots, $A_n$ infinite subsets of $R$.
Then for any nonzero polynomial $f\in R[X_1,\ldots, X_n]$,
there are $a_1$, \ldots, $a_n$ with $a_j\in A_j$ for $1\leq j\leq n$
such that $f(a_1,\ldots, a_n)\neq 0$.
\end{lemma}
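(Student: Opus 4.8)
The plan is to argue by induction on the number of indeterminates $n$. The base case $n=1$ is the statement that a nonzero one-variable polynomial over an integral domain has only finitely many roots, so it cannot vanish on the infinite set $A_1$; pick any $a_1\in A_1$ that is not a root.

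For the inductive step, write $f\in R[X_1,\ldots,X_n]$ as a polynomial in $X_n$ with coefficients in $R[X_1,\ldots,X_{n-1}]$, say $f=\sum_{i=0}^{m} g_i(X_1,\ldots,X_{n-1})\,X_n^{i}$ with $g_m\neq 0$. Since $g_m$ is a nonzero polynomial in $n-1$ indeterminates over the integral domain $R$, the induction hypothesis (applied with the infinite sets $A_1,\ldots,A_{n-1}$) yields $a_1\in A_1,\ldots,a_{n-1}\in A_{n-1}$ with $g_m(a_1,\ldots,a_{n-1})\neq 0$. Then $h(X_n)\define f(a_1,\ldots,a_{n-1},X_n)\in R[X_n]$ has leading coefficient $g_m(a_1,\ldots,a_{n-1})\neq 0$, so $h$ is a nonzero one-variable polynomial; by the base case there is $a_n\in A_n$ with $h(a_n)\neq 0$, i.e. $f(a_1,\ldots,a_n)\neq 0$.

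The only point requiring any care is that the argument stays inside an integral domain throughout: this is needed both to conclude that a nonzero polynomial of degree $m$ has at most $m$ roots (used in the base case) and to know that specializing the leading coefficient to the nonzero value $g_m(a_1,\ldots,a_{n-1})$ does not lower the degree of $h$ below $m$, so that $h$ is genuinely nonzero. Since $R$ is an integral domain by hypothesis and all the rings $R[X_1,\ldots,X_k]$ are then integral domains as well, there is no obstacle here, and the induction goes through cleanly. \qed
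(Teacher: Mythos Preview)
Your proof is correct. The paper itself does not supply a proof of this lemma; it merely introduces it with the phrase ``We also note the following basic fact'' and states the result without argument. Your induction on $n$, reducing to the one-variable case via the leading coefficient in $X_n$, is the standard way to establish this, and all the details (in particular the use of the integral-domain hypothesis to bound the number of roots in the base case) are handled correctly.
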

The following fact is easily verified.

\begin{lemma}
\label{lem:sum rank}
Let $T$ be a nonnegative tensor with nonnegative rank $s$.
Suppose
$
T=T_1+\cdots+T_s
$
with $\nnrank T_j=1$ for $1\leq j\leq s$, then
$$
\nnrank (T_1+\cdots+T_r)=r
$$
for any $1\leq r\leq s$.
\end{lemma}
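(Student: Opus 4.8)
The plan is to prove the statement by deriving a contradiction from the assumption that some partial sum $T_1+\cdots+T_r$ has nonnegative rank strictly less than $r$. The obvious inequality $\nnrank(T_1+\cdots+T_r)\leq r$ is immediate, since $T_1+\cdots+T_r$ is written as a sum of $r$ tensors each of nonnegative rank $1$; so the whole content is the reverse inequality $\nnrank(T_1+\cdots+T_r)\geq r$. The key observation is that nonnegativity makes nonnegative rank \emph{monotone under adding nonnegative rank-one summands}: if $U$ and $V$ are nonnegative tensors, then $\nnrank(U+V)\geq\nnrank U$ is \emph{not} true in general, but we only need the weaker fact tailored to rank-one pieces.

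First I would suppose, for contradiction, that $\nnrank(T_1+\cdots+T_r)=r'<r$ for some $r$ with $1\leq r\leq s$, and choose $r$ minimal with this property; note $r\geq 2$ since $T_1$ has nonnegative rank $1$. Write $T_1+\cdots+T_r=U_1+\cdots+U_{r'}$ with each $U_i$ of nonnegative rank $1$, i.e.\ each $U_i\in\RRRplus^{N_1\times\cdots\times N_d}$ is an $\RRRplus$-rank $1$ tensor. Then
$$
T=T_1+\cdots+T_s=(U_1+\cdots+U_{r'})+(T_{r+1}+\cdots+T_s),
$$
which exhibits $T$ as a sum of $r'+(s-r)<s$ nonnegative rank-one tensors (each $T_j$ for $j>r$ has nonnegative rank $1$ by hypothesis). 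This contradicts $\nnrank T=s$, and the lemma follows. The argument is genuinely short: the only point to be careful about is that the $U_i$ are themselves honest nonnegative rank-one tensors, so that the concatenated decomposition of $T$ is again a decomposition into nonnegative rank-one tensors — this is automatic from the definition of nonnegative rank.

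The main (and only) subtlety worth flagging is that this uses nothing about $\RRRplus$ specifically beyond the fact that a decomposition of $T$ into $s$ rank-one summands restricts, after replacing $T_1+\cdots+T_r$ by any shorter decomposition, to a strictly shorter decomposition of $T$; the same reasoning applies verbatim over any semiring, which is why the excerpt calls it ``easily verified''. I do not expect any real obstacle here; the proof is a one-paragraph counting argument, and the hypothesis ``$\nnrank T=s$ with an explicit length-$s$ decomposition'' is exactly what rules out a shorter decomposition of a partial sum.
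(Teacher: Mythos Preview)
Your argument is correct: the upper bound $\nnrank(T_1+\cdots+T_r)\leq r$ is immediate, and if the rank were $r'<r$ then replacing $T_1+\cdots+T_r$ by a length-$r'$ decomposition would exhibit $T$ as a sum of $r'+(s-r)<s$ nonnegative rank-one tensors, contradicting $\nnrank T=s$. The paper gives no proof (it only says the fact is ``easily verified''), and yours is exactly the intended one-line counting argument; the minimality of $r$ you introduce is harmless but unnecessary, since the contradiction works for any offending $r$ directly.
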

We also note the following fact.

\begin{lemma}
\label{lem:g than min}
Let $r$ be a typical nonnegative rank of nonnegative tensors
of format $N_1\times\cdots\times N_d$.
Then $r\geq\min(\trank_\RRR(N_1,\ldots,N_d))$.
\end{lemma}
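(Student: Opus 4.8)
The plan is to reduce everything to a statement about the complex rank and then invoke the characterization of the generic rank recalled in Section~\ref{sec:pre}. Since $\min(\trank_\RRR(N_1,\ldots,N_d))=\grank_\CCC(N_1,\ldots,N_d)$, it suffices to prove $r\geq\grank_\CCC(N_1,\ldots,N_d)$. I would argue by contradiction, assuming $r<\grank_\CCC(N_1,\ldots,N_d)$. Because $r$ is a typical nonnegative rank, by definition there is a nonempty Euclidean-open subset $U$ of $\RRR^{N_1\times\cdots\times N_d}$ all of whose members have nonnegative rank $r$. Any nonnegative rank $r$ tensor is a sum of $r$ nonnegative, hence real, hence complex, rank $1$ tensors, so $\rankc T\leq r$ and therefore $T\in\image\Phi_r^\CCC$ for every $T\in U$; that is, $U\subseteq\image\Phi_r^\CCC$.

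The next step is to show that $\image\Phi_r^\CCC$ lies inside a proper Zariski-closed subset of $\CCC^{N_1\times\cdots\times N_d}$. By Lemma~\ref{lem:g than grank}, the inequality $r<\grank_\CCC(N_1,\ldots,N_d)$ means that the Jacobian of $\Phi_r^\CCC$ fails to be of full row rank at every point of $(\CCC^{N_1}\times\cdots\times\CCC^{N_d})^r$. In characteristic $0$ the dimension of the Zariski closure of $\image\Phi_r^\CCC$ equals the maximal rank attained by this Jacobian, so that dimension is at most $N_1\cdots N_d-1<\dim\CCC^{N_1\times\cdots\times N_d}$. Hence the Zariski closure of $\image\Phi_r^\CCC$ is a proper subvariety, and there is a nonzero polynomial $f$ in the tensor-entry variables that vanishes identically on $\image\Phi_r^\CCC$, in particular on $U$.

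Finally I would transfer this to the real picture. Writing $f=f_1+\sqrt{-1}\,f_2$ with $f_1,f_2$ having real coefficients, at least one of $f_1,f_2$ is nonzero, and since $U$ consists of real tensors both $f_1$ and $f_2$ vanish on $U$. But a nonzero real polynomial cannot vanish on a nonempty Euclidean-open subset of $\RRR^{N_1\times\cdots\times N_d}$, since its real zero set is a proper algebraic subset and has empty interior. This contradiction gives $r\geq\grank_\CCC(N_1,\ldots,N_d)=\min(\trank_\RRR(N_1,\ldots,N_d))$.

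The only genuinely delicate point is the middle step: knowing that $\image\Phi_r^\CCC$ is not Zariski-dense once $r$ is below the generic rank. I expect to dispatch it via the standard fact that over an algebraically closed field of characteristic $0$ the dimension of the closure of the image of a polynomial map equals the generic rank of its Jacobian, used together with Lemma~\ref{lem:g than grank}; alternatively one may note directly that $\grank_\CCC(N_1,\ldots,N_d)$ is the rank of a generic complex tensor, so the locus of complex tensors of rank at most $r<\grank_\CCC(N_1,\ldots,N_d)$ is not dense. Everything else — the passage $\nnrank\leftarrow\rankc$, the real/imaginary part splitting, and the fact that a nonzero real polynomial has no open zero set — is routine.
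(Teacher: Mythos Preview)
Your argument is correct, but it takes a considerably longer route than the paper's. The paper's proof is two lines: from the open set $U$ of tensors with $\nnrank T=r$ one uses only the trivial inequality $\rankr T\leq\nnrank T$ to see that every $T\in U$ has $\rankr T\leq r$, and then concludes immediately that $r\geq\min(\trank_\RRR(N_1,\ldots,N_d))$. You instead pass to $\rankc$, invoke Lemma~\ref{lem:g than grank} and the Jacobian/dimension criterion to show $\image\Phi_r^\CCC$ is not Zariski dense when $r<\grank_\CCC$, and then pull back a vanishing polynomial to $\RRR$.

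What you are really doing is reproving, in this special situation, the background fact the paper takes for granted: that the locus of real tensors of real rank at most $r$ has empty Euclidean interior once $r$ is below the minimal typical real rank (equivalently, below $\grank_\CCC$). Your approach has the virtue of being self-contained---you make the algebraic-geometry step explicit rather than hiding it inside the identity $\min(\trank_\RRR)=\grank_\CCC$---but it is heavier than necessary. The paper's version simply uses $\rankr\leq\nnrank$ and the definition of minimal typical $\RRR$-rank, leaving the semialgebraic/Zariski-closure reasoning to the cited references.
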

\begin{proof}
By the definition of typical nonnegative rank,
there is a nonempty Euclidean open subset $U$ of $\RRRplus^{N_1\times\cdots\times N_d}$
such that $\nnrank T=r$ for any $T\in U$.
Since $\rankr T\leq\nnrank T$, $U$ consists of tensors whose $\RRR$-rank
is less than or equals to $r$.
Therefore, $r\geq\min(\trank_\RRR(N_1,\ldots,N_d))$.
\end{proof}

Now we state the following.

\begin{thm}
\label{thm:t rank range}
The set of typical nonnegative ranks of nonnegative tensors
of format $N_1\times\cdots\times N_d$ is
$
\{r\mid r\in\ZZZ, \grank_\CCC(N_1,\ldots, N_d)\leq r\leq N\}$.
\end{thm}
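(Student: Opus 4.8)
The plan is to prove the two inclusions separately. The inclusion of the set of typical nonnegative ranks into $\{r\in\ZZZ\mid \grank_\CCC(N_1,\ldots,N_d)\leq r\leq N\}$ follows immediately from results already established: the upper bound $r\leq N$ is Lemma~\ref{lem:u bound} (applied to any tensor in the relevant open set), and the lower bound $r\geq\grank_\CCC(N_1,\ldots,N_d)$ follows from Lemma~\ref{lem:g than min} together with the fact, recalled in Section~\ref{sec:pre}, that $\grank_\CCC(N_1,\ldots,N_d)=\min(\trank_\RRR(N_1,\ldots,N_d))$. So the real content is the reverse inclusion: every integer $r$ with $\grank_\CCC(N_1,\ldots,N_d)\leq r\leq N$ is a typical nonnegative rank.

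For the reverse inclusion, fix such an $r$. The idea is to produce a nonnegative tensor $T$ of nonnegative rank exactly $r$ together with a Euclidean neighborhood of $T$ inside $\RRRplus^{N_1\times\cdots\times N_d}$ consisting of tensors of nonnegative rank $r$. I would build $T$ as a sum of $r$ nonnegative rank-one tensors chosen so that, on one hand, the Jacobian of $\Phi_r^\CCC$ has full row rank at the corresponding point (which forces nearby tensors to have $\RRR$-rank, hence nonnegative rank, at least... no: full row rank of the Jacobian means the image contains an open set, so $\RRR$-rank $\leq r$ on a neighborhood), and on the other hand a lower-bound argument analogous to Lemma~\ref{lem:max trank} forces nonnegative rank $\geq r$ nearby. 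Concretely: since $r\leq N$, I can index $r$ of the multi-indices used in the proof of Lemma~\ref{lem:max trank} — pick a subset $I'\subseteq I$ with $\#I'=r$ — and set $T_0'=\sum_{(i_1,\ldots,i_d)\in I'}\eee_{i_1}\otimes\cdots\otimes\eee_{i_d}$. The same counting argument as in Lemma~\ref{lem:max trank}, restricted to the multi-indices in $I'$, shows that any nonnegative tensor sufficiently close to $T_0'$ has nonnegative rank $\geq r$. This handles the lower bound uniformly on a neighborhood.

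The upper bound near $T_0'$ is where I expect the main obstacle, because $T_0'$ itself need not be a point where the Jacobian of $\Phi_r^\CCC$ has full row rank, and more importantly I need nearby tensors to have \emph{nonnegative} rank $\leq r$, not merely $\RRR$-rank or $\CCC$-rank $\leq r$. The standard trick is: since $r\geq\grank_\CCC(N_1,\ldots,N_d)$, by Lemma~\ref{lem:g than grank} some maximal minor of the Jacobian of $\Phi_r^\CCC$ is a nonzero polynomial with integer coefficients; by Lemma~\ref{lem:inf domain} (applied with $R=\RRR$, $A_j$ the positive reals, or a suitable shift thereof) this minor is nonzero at some point $(a_1,\ldots,a_r)$ whose coordinates are \emph{strictly positive}. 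At such a point $\Phi_r^{\RRRplus}$ — restricted to tuples of strictly positive vectors — is a submersion onto an open subset of $\RRR^{N_1\times\cdots\times N_d}$, and since strict positivity is an open condition, the image contains a Euclidean neighborhood $W$ of $T_1:=\Phi_r^\RRR(a_1,\ldots,a_r)$ consisting entirely of tensors of nonnegative rank $\leq r$. To also get nonnegative rank $\geq r$ on $W$, I would choose the $a_k$ so that $T_1$ is close to $T_0'$: this requires combining the two constructions. The cleanest route is probably to first rescale — multiply the whole configuration by a large constant and add $T_0'$, or rather observe that we have freedom to take $a_1,\ldots,a_r$ as small positive perturbations of the standard rank-one tensors $\eee_{i_1}\otimes\cdots\otimes\eee_{i_d}$ for $(i_1,\ldots,i_d)\in I'$ (replacing the zero entries by tiny positive numbers), so that $T_1$ is within $\epsilon=\frac{1}{3r}$ of $T_0'$; then $W\cap B(T_0',\epsilon)$ is a nonempty open subset of $\RRRplus^{N_1\times\cdots\times N_d}$ on which nonnegative rank equals $r$, so $r$ is a typical nonnegative rank.

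The delicate point to verify carefully is that a nonzero maximal minor of the Jacobian can be made nonzero while simultaneously keeping $(a_1,\ldots,a_r)$ within an arbitrarily small neighborhood of the chosen "standard" configuration with tiny positive entries; this is exactly Lemma~\ref{lem:inf domain} applied with the infinite sets $A_j$ taken to be small intervals of positive reals around the target coordinates (an interval of reals is infinite), so the minor, being a nonzero polynomial, cannot vanish identically on the product of these intervals. Once that is in hand, everything assembles: the lower bound from the Lemma~\ref{lem:max trank}-style counting on $I'$ and the upper bound from the submersion property of $\Phi_r^\RRRplus$ at a nearby strictly-positive point together pin the nonnegative rank to $r$ on a nonempty open set, completing the proof.
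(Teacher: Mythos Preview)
Your argument is correct, but it takes a different route from the paper's for the reverse inclusion. Both proofs obtain the upper bound $\nnrank\leq r$ on an open set by the same mechanism: a nonzero maximal minor of the Jacobian of $\Phi_r^\CCC$ (Lemma~\ref{lem:g than grank}), evaluated at a strictly positive point via Lemma~\ref{lem:inf domain}, makes $\Phi_r^{\RRRplusb}$ a submersion there. The difference is in the lower bound $\nnrank\geq r$. The paper never revisits the counting argument of Lemma~\ref{lem:max trank}; instead it uses Lemma~\ref{lem:sum rank}: it takes the open set $\msOOO$ of nonnegative rank $N$, pulls it back through $\Phi_N^{\RRRplusb}=\Phi_r^{\RRRplusb}+\Phi_{N-r}^{\RRRplusb}$ to find open sets $U_1,U_2$, and then observes that every $\Phi_r^{\RRRplusb}(A_1)$ with $A_1\in U_1$ is the first $r$ terms of a length-$N$ minimal nonnegative decomposition, hence has nonnegative rank exactly $r$. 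Your approach instead fixes a subset $I'\subset I$ of size $r$, sets $T_0'=\sum_{I'}\eee_{i_1}\otimes\cdots\otimes\eee_{i_d}$, and reruns the disjoint-index-set counting from Lemma~\ref{lem:max trank} to get $\nnrank\geq r$ on a ball around $T_0'$; you then have to coordinate the Jacobian step with this ball by choosing the intervals in Lemma~\ref{lem:inf domain} small enough (via continuity of $\Phi_r$) that the resulting submersion point lands inside it. The paper's route is shorter and more modular, since Lemma~\ref{lem:sum rank} handles the lower bound in one line with no reference back to the combinatorics of $I$; your route is more explicit and avoids Lemma~\ref{lem:sum rank} altogether, at the cost of the extra coordination step.
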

\begin{proof}
Let $r$ be a typical nonnegative rank of nonnegative tensors of format
$N_1\times\cdots\times N_d$.
By Lemma \ref{lem:u bound}, we see that $r\leq N$.
Further, by Lemma \ref{lem:g than min},
$r\geq\min(\trank_\RRR(N_1,\ldots, N_d))=\grank_\CCC(N_1,\ldots, N_d)$.

Conversely assume that
$\grank_\CCC(N_1,\ldots, N_d)\leq r \leq N$.
By Lemma \ref{lem:g than grank}, we see that there is a maximal minor
$f$ of the Jacobian of $\Phi_r^\CCC$ which is nonzero as a polynomial.
Since $N$ is a typical nonnegative rank of nonnegative tensors
of format $N_1\times\cdots\times N_d$, we see that there is a Euclidean open
subset $\msOOO$ of $\RRRplus^{N_1\times\cdots\times N_d}$ such that
$\nnrank T=N$ for any $T\in\msOOO$.

Since 
$$
\Phi_N^\RRRplusb\colon(\RRRplus^{N_1}\times\cdots\times\RRRplus^{N_d})^N
\to
\RRRplus^{N_1\times\cdots\times N_d}
$$
is continuous and identical with
$$
(\RRRplus^{N_1}\times\cdots\times\RRRplus^{N_d})^r
\times
(\RRRplus^{N_1}\times\cdots\times\RRRplus^{N_d})^{N-r}
\to
\RRRplus^{N_1\times\cdots\times N_d},
$$
$(A_1,A_2)\mapsto \Phi_r^\RRRplusb(A_1)+\Phi_{N-r}^\RRRplusb(A_2)$,
we see that there are open subsets 
$U_1$ of $(\RRRplus^{N_1}\times\cdots\times\RRRplus^{N_d})^r$
and
$U_2$ of $(\RRRplus^{N_1}\times\cdots\times\RRRplus^{N_d})^{N-r}$
such that
$\Phi_r^\RRRplusb(U_1)+\Phi_{N-r}^\RRRplusb(U_2)\subset\msOOO$.

By Lemma \ref{lem:inf domain}, we see that there exists $A_1\in U_1$
such that $f(A_1)\neq 0$.
Therefore, by the inverse function theorem,
we see that $\image \Phi_r^\RRRplusb(U_1)$ contains an open
neighborhood $\msOOO'$ of $\Phi_r^\RRRplusb(A_1)$.
On the other hand, by Lemma \ref{lem:sum rank},
we see that every element of $\image \Phi_r^\RRRplusb(U_1)$
has nonnegative rank $r$.
Thus, we see that every element of $\msOOO'$ has nonnegative
rank $r$.
Therefore, $r$ is a typical nonnegative rank.
\end{proof}

\begin{remark}
\rm
The fact that the set of typical nonnegative ranks form a consecutive
integers can also be proved along the same line with
\cite[Theorem 2.2]{bbo}.
\end{remark}


\end{document}